\documentclass[reqno]{amsart}                             

\usepackage{amsmath}
\usepackage{amssymb}
\usepackage{amsfonts}

\usepackage{cite}
\usepackage{amsmath,amssymb,amsfonts}
\usepackage{algorithmic}
\usepackage{graphicx}
\usepackage{textcomp}
\usepackage{cancel}
\usepackage{xcolor}
\usepackage{pdfcomment}
\usepackage[tikz]{ocgx2}
\usepackage{marginnote}

\usepackage[normalem]{ulem}

\usepackage{graphicx}
\usepackage{subfigure}

\usepackage{epsfig} 
\usepackage{cite}

\usepackage{bm}

\usepackage{hyperref}
\usepackage{cleveref}

\usepackage[normalem]{ulem}
\usepackage{comment}
\usepackage{enumerate}
\usepackage{mathrsfs}

\newtheorem{theorem}{Theorem}[section]

\newtheorem{proposition}[theorem]{Proposition}
\newtheorem{corollary}[theorem]{Corollary}

\newtheorem{assumption}[theorem]{Assumption}

\newtheorem{thm}{Theorem}[section]

\newtheorem{prop}[thm]{Proposition}
\newtheorem{rem}[thm]{Remark}

\newcommand{\mytilde}{\raise.17ex\hbox{$\scriptstyle\mathtt{\sim}$}}

\newcommand{\RgeO}{\ensuremath{\mathbb{R}_{ \geq 0}}}
\newcommand{\Rat}[1]{\ensuremath{\mathbb{R}^{#1}}}

\newcommand{\bone}{\mathbf{1}}
\newcommand{\A}{\mathcal{A}}
\newcommand{\B}{\mathcal{B}}

\renewcommand{\P}{\mathcal{P}}

\newcommand{\X}{\mathcal{X}}
\newcommand{\bP}{\mathbb{P}}

\newcommand{\argmin}{{\rm argmin}}

\newcommand{\class}{\mathcal{C}}

\newcommand{\Ball}[2]{\mathcal{B}_p(#1,#2)}

\newcommand{\empirical}{P^n}

\usepackage{xparse}

\NewDocumentCommand{\lcAmbSet}{g g g g}
{
\IfNoValueTF{#1}
  {\mathcal{T}_{p,\A_m}}
  {
    \IfNoValueTF{#3}
      {\mathcal{T}_{p,\A_m}(#1,#2)}
      {
        \IfNoValueTF{#4}
          {\mathcal{T}_{#1,\A_m}(#2,#3)}
          {\mathcal{T}_{#1,#2}(#3,#4)}
      }
  }
}

\newcommand{\bE}{\ensuremath{\mathbb{E}}}
\newcommand{\eps}{\ensuremath{\varepsilon}}

\newcommand{\longthmtitle}[1]{\mbox{}{\bf \textit{(#1).}}}

\usepackage{todonotes}

\allowdisplaybreaks

\title{Local-discrepancy ambiguity sets for distributionally robust optimization}

\author[I. Papagoudis]{Ilias Papagoudis}
\address{Delft Center for Systems and Control, Faculty of Mechanical Engineering, TU Delft}
\email{i.p.papagoudis@tudelft.nl}
\author[D. Boskos]{Dimitris Boskos}
\address{Delft Center for Systems and Control, Faculty of Mechanical Engineering, TU Delft}
\email{d.boskos@tudelft.nl}

\begin{document}

\thanks{Delft Center for Systems and Control, Faculty of Mechanical Engineering, TU Delft}
\thanks{This work was supported by NWO grant OCENW.M.22.119}


\begin{abstract}
In this paper, we introduce a new  class of data-driven ambiguity sets for distributionally robust optimization (DRO) that aim to enhance the local inference accuracy of an unknown distribution. Building on optimal transport–based discrepancy measures, we depart from the conventional paradigm of monolithic ambiguity balls, which rely on a single global transport budget to capture distributional uncertainty. Instead, we introduce local-discrepancy ambiguity sets, where the support of the distribution is partitioned into predefined regions and transport cost constraints are imposed locally to each region. This added flexibility allows for a more refined characterization of plausible distributions by preventing highly imbalanced transport across regions. As a result, the proposed approach can yield less conservative and more robust decisions, particularly in settings where decisions depend on the local properties of the sample space. We further derive tractable reformulations of the resulting DRO problems and demonstrate through simulations that our method can outperform traditional Wasserstein ambiguity sets in terms of decision quality.
\end{abstract}

\maketitle

\section{Introduction}
\label{sec:introduction}

As modern systems become increasingly complex, their level of uncertainty continues to grow. Consequently, optimization methods that assume perfect model information are often inadequate for addressing decision-making problems in such settings. Optimization under uncertainty provides a more suitable framework for these challenges.
A central approach in this context is stochastic optimization, which determines optimal decisions for problems affected by randomness, typically in their cost functions and constraints. A common standing assumption is that the underlying random variables follow a known probability distribution \cite{shapiro2021lectures}. While this approach is powerful in many cases, it relies on the strong premise that the underlying uncertainty is fully characterized.

In practice, one often has access only to samples from the underlying distribution rather than the distribution itself, and must balance robustness and performance based on them. A classical data-driven decision-making approach is the sample average approximation (SAA), which replaces the true distribution with the empirical distribution generated by the samples \cite{shapiro2021lectures}. This method is computationally tractable and enjoys asymptotic convergence guarantees~\cite{BirgeLouveaux2011,king1991epi,kleywegt2002sample}, but lacks finite-sample guarantees and may yield over-optimistic decisions due to the optimizer’s curse \cite{smith2006optimizer}.

Distributionally robust optimization (DRO) addresses these issues by constructing an ambiguity set of plausible probability distributions based on the data and optimizing against the worst-case expectation over this set. This framework provides robustness with probabilistic guarantees while maintaining good performance, and has been successfully applied in areas such as control \cite{cherukuri2019cooperative,shafieezadeh2018wasserstein,yang2020wasserstein}, energy systems \cite{wang2018risk}, and machine learning \cite{abdullah2019wasserstein,najafi2019robustness}. A key challenge in DRO is the design of the ambiguity set: overly large sets lead to conservative solutions, while overly small ones fail to ensure robustness, requiring a careful trade-off between the two.

Ambiguity sets can be broadly classified into moment-based and discrepancy-based sets. Moment-based sets impose constraints on distributional moments \cite{popescu2007robust,delage2010distributionally}, offering computational tractability, albeit with weaker statistical guarantees and some reliance on chosen moment constraints. Discrepancy-based sets, or ambiguity balls, include distributions within a prescribed distance from a nominal distribution, defined through discrepancies such as $\phi$-divergences \cite{jiang2016data}, the total variation \cite{sun2016convergence}, the Prokhorov metric \cite{erdougan2006ambiguous}, and the Wasserstein distance \cite{zhao2018data,mohajerin2018data}. Among these, Wasserstein balls provide finite-sample guarantees of containing the true distribution when centered at the empirical distribution \cite{fournier2015rate}, accommodate discrete and continuous distributions, and capture support mismatches, unlike divergences or the total variation metric.

For practical implementation, tractable reformulations of the Wasserstein DRO problem are essential, as worst-case expectations are taken over infinite-dimensional ambiguity sets. Such reformulations exist for broad classes of cost functions \cite{mohajerin2018data,gao2023distributionally,blanchet2019quantifying}, yielding equivalent problems that can be efficiently solved with standard optimization methods.
When the goal is to solve a single optimization problem, the ambiguity set can also be informed by the problem itself, enabling high-confidence optimality guarantees without requiring that the ambiguity set contain the true distribution with the same confidence~\cite{JB:19,MS-PP:23,RG:23}.
In more general settings, one may be interested in solving multiple optimization problems under the same uncertainty. In such cases, despite their widespread success, Wasserstein ambiguity sets may contain highly irrelevant distributions, which can in turn lead to conservative decisions. By highly irrelevant, we mean that the candidate distributions are highly unlikely to have produced the observed data and may significantly affect the expected value of the objective function.

To address this challenge, we introduce in this paper a new type of ambiguity set, which excludes such irrelevant elements, improving the performance and reliability of Wasserstein-based DRO algorithms. Our guiding intuition is that for uniformly distributed uncertainty and sufficiently many samples, the transport cost between the empirical and the true underlying distribution will also be close to uniform across the sample space. Thus, by partitioning the sample space into several smaller regions, we can better control the fluctuations between the empirical and the true distribution from region to region.

Our main contribution is the introduction of a vector-valued optimal-transport-based discrepancy, where each component controls the optimal transport budget from a corresponding region of the reference distribution. Based on this new discrepancy, we construct an ambiguity set, coined a local-discrepancy ambiguity set, which includes all distributions close to the reference distribution, in the sense that they respect all the optimal transport constraints encoded by the discrepancy vector.

Our second contribution is the derivation of strong duality results for local-discrepancy ambiguity sets for a broad class of cost functions. Our third contribution is to obtain tractable reformulations for certain classes of cost functions, including reformulations for optimal facility location problems. Our last contribution is the introduction of a computational calibration procedure that selects the ambiguity-set radii to achieve a prescribed empirical coverage level over a given distribution class. This provides a common basis for comparing local-discrepancy sets with Wasserstein balls at matched confidence levels and facilitates a fair numerical comparison between them. Analytical finite-sample guarantees for the proposed ambiguity sets require a separate statistical analysis, rather than a direct application of standard Wasserstein concentration results, and are beyond the scope and space limitations of the present conference paper.

\section{Preliminaries and Notation}
\label{sec:preliminaries}
Throughout the paper, we use the following notation.
We denote by $\mathbb{R}_{\geq0}$ and $\mathbb{R}_{>0}$ the non-negative and positive real numbers, respectively, and define $\Bar{\mathbb{R}}=\mathbb{R}\cup\{-\infty,\infty\}$.
We use the notation $\preceq$ for the elementwise partial order in $\Rat{n}$ and denote by $\#A$ the cardinality of a set $A$. For $m\in\mathbb{N}\setminus\{0\}$, we denote $[m] := \{1, \ldots, m\}$. 
We denote by ${\rm int}(A)$, ${\rm cl}(A)$, and ${\rm bdry}(A)$, respectively, the interior, closure, and boundary of a set $A$ in some topological space. Given $\bm\eps=(\eps_1,\ldots,\eps_m)\in\RgeO^m$, we denote by $[0,\bm\eps]$ the hyperrectangle $\prod_{k=1}^m[0,\eps_k]\subset\Rat{m}$. The first and second projection on the product space $X\times Y$ are ${\rm{pr_1}}(x,y):=x$ and ${\rm pr_2}(x,y):=y$, respectively.  

Given the measurable spaces $(\Omega,\mathcal F)$ and $(\Omega',\mathcal F')$, a measurable map $\Psi: (\Omega,\mathcal F) \to (\Omega',\mathcal F')$ assigns to each measure $\mu$ in $(\Omega,\mathcal F)$ the pushforward measure $\Psi_\#\mu$ in $(\Omega',\mathcal F')$ defined by $\Psi_\#\mu(B):=\mu(\Psi^{-1}(B))$ for all $B\in\mathcal F'$.
We denote the class of distributions on a Polish space $\Xi$ (separable metric space) as $\P(\Xi)$, and we will always assume $\Xi$ equipped with its Borel $\sigma$-algebra $\B(\Xi)$. 
We denote by $\Pi(P,Q)$ the set  of couplings between two distributions  $P$ and $Q$ in $\P(\Xi)$, i.e., distributions $\pi\in\P(\Xi\times\Xi)$ with marginals  ${\rm pr}_{1\#}\pi=P$ and ${\rm pr}_{2\#}\pi=Q$, respectively. We denote by $\Pi(Q)$ all the distributions on $\Xi\times\Xi$ with $Q$ as their first marginal.
Given a metric $\rho$ on $\Xi$ and two distributions $P,Q\in\P_p(\Xi)$ (with finite $p$th moment), their $p$th Wasserstein distance is 
\begin{align*}
W_p(Q,P):= \inf_{\pi\in\Pi(Q,P)}\Big(\int_{\Xi\times\Xi}\rho(x,y)^pd\pi(x,y)\Big)^{1/p}.
\end{align*}
%

We denote by $C(\Xi)$ the class of continuous real-valued functions on a topological space $\Xi$, and by $C_{2,{\rm const}}(\Xi\times\Xi)$ the functions $\gamma \in C(\Xi\times\Xi)$  with  $\gamma(\zeta,\xi) =\gamma(\zeta,\xi') $ for all $\zeta,\xi,\xi'\in\Xi$.
A function $h: \Rat{n} \to \bar{\Rat{}}$ is proper if $h(\xi) < +\infty$ for at least one $\xi$ and $h(\xi) > -\infty$ for every $\xi\in\Rat{n}$.  The conjugate of $h$ is defined as $h^*(z) := \sup_{\xi\in\Rat{n}} \langle z,\xi \rangle - h(\xi)$. 
The characteristic function of a set $A\subset\Rat{d}$ is $\chi_{A}(\xi) := 0$ if $\xi\in A$ and $\infty$ otherwise.
Its conjugate function is the support function of $A$ and it is equal to $\sigma_{A}(z) := \sup_{\xi\in A} \langle z,\xi\rangle$. The indicator function of  $A$ is $\bone_{A}(\xi):= 1$ if $\xi\in A$ and $0$ otherwise. We denote by $B(x,r)$ the ball with center $x$ and radius $r$, and by $\mathbb S^{d-1}$ the unit sphere in $(\Rat{d},\|\cdot\|_2)$. 

\section{Problem formulation}

Stochastic optimization revolves around optimal decision-making for systems that are affected by stochastic uncertainty. In the ideal case where the distribution of the uncertainty is known, a stochastic optimization problem takes the form  
\begin{align*}
    \inf_{x\in \mathcal{X}} \mathbb{E}_{P}[f(x,\xi)]
\end{align*}
where the cost function $f:\mathcal{X}\times \Xi \to \mathbb{R}$ depends on the decision variable $x\in\mathcal{X}$ and the uncertainty $\xi$, which takes values in the Polish space $\Xi$ with metric $\rho$ and has probability distribution~$P$. In this setup, one seeks to solve the optimization problem to determine the best decision on average. 

However, in real-world applications, the uncertainty $P$ is typically unknown. What is usually provided instead are $n$ i.i.d. samples $\xi_1,\ldots,\xi_n$ following the law $P$, with the help of which we aim to approximate the true distribution.
In this case, instead of the original optimization problem one can consider the so-called sample average approximation (SAA), which evaluates the expected cost over the empirical distribution $\empirical:= \frac{1}{n}\sum_{i=1}^n \delta_{\xi_i}$. Here each $\delta_{\xi_i}$ denotes the Dirac measure at point $\xi_i$. Since the SAA  relies directly on the data, its performance is highly dependent on their quality and quantity. When the data are limited or noisy, SAA may exhibit poor out-of-sample performance.

To address such issues, distributionally robust optimization (DRO) has emerged as a decision-making framework that hedges against uncertainty in the uncertainty. The key idea is to optimize over the worst-case distribution from an ambiguity set $\P$ of plausible distributions, which depends on the provided data. Namely, we formulate the DRO problem 
\begin{align}
    \inf_{x\in\X} \sup_{P\in \P} \mathbb{E}_P[f(x,\xi)].
    \label{eq:DRO}
\end{align}

The choice of the ambiguity set $\P$ is a key aspect of DRO, since it determines how plausible distributions are grouped; this has implications on the robustness, performance, and tractability of the chosen decision scheme. A widely adopted choice of  ambiguity sets is Wasserstein balls 
\begin{align*}
    \Ball{Q}{\eps}:= \{P\in\P_p(\Xi): W_p(Q,P)\le \eps\},
\end{align*}
which group all distributions up to some distance $\eps$ in the $p$th Wasserstein metric from a reference distribution $Q\in\P_p(\Xi)$. For data-driven problems, the ambiguity balls are typically centered at the empirical distribution $Q\equiv\empirical$. 

However, monolithic ambiguity balls may become conservative, because the distance between two distributions is defined through the global (optimal) cost to transport mass from one to the other, which gives no information about their closeness in sub-regions of their domain (cf. Figure~\ref{fig:motivation}, right).
For instance, a designer may often be interested in quantifying a certain part of the uncertainty. Considering a monolithic Wasserstein ball can lead to the exploitation of the transport budget that captures the dissimilarity in farther away regions, which may, in turn, generate excessive distribution fluctuations in the region of interest. 

These considerations motivate the problem that we aim to address in this paper, which is to \textit{build ambiguity sets that capture the dissimilarity between distributions locally across their domain, using some vector-valued discrepancy measure. These ambiguity sets should yield DRO problems with tractable finite-dimensional formulations. We also seek to compare these sets to monolithic ambiguity balls in data-driven settings.}

\begin{figure}[h]
	\centering
	\includegraphics[width=0.65\linewidth]{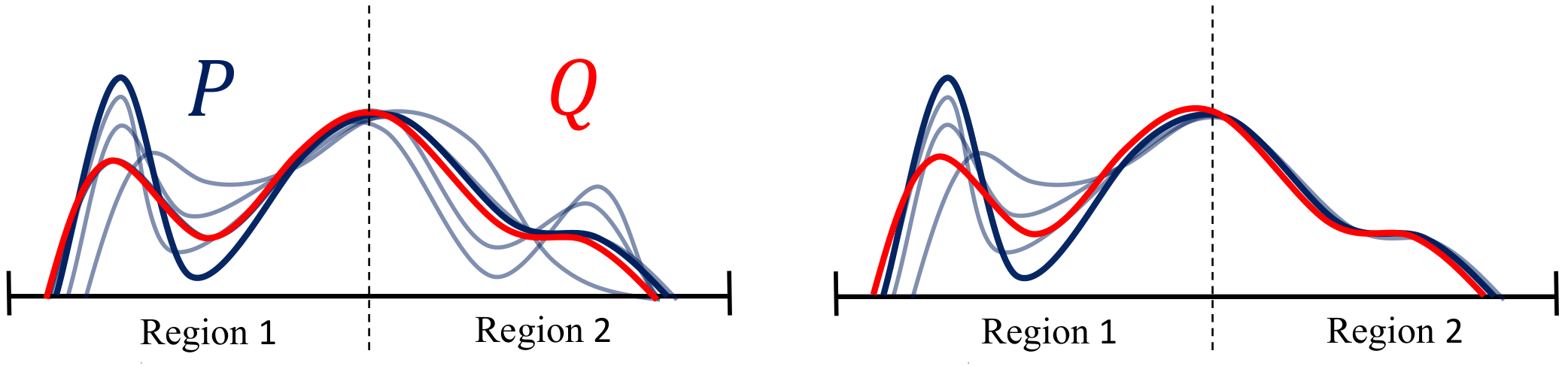} \\ 
	\caption{(Left) The Wasserstein ball around the baseline model $Q$ contains the true distribution $P$ but ignores the prior information that $P$ and $Q$ mostly differ in Region 1. (Right) An  ambiguity set using this local information excludes multiple non-relevant distributions.} \label{fig:motivation}
\end{figure}

\section{Local discrepancy ambiguity sets}

Here, we introduce the desired class of ambiguity sets that can capture local dissimilarities between distributions. To this end, we define a vector-valued discrepancy notion between distributions, which splits the transport cost from one to the other into local costs across their domain. We therefore consider a covering of the domain and determine the individual corresponding costs of transporting mass from the source to the target distribution. 

Consider a covering $\A_m = \{A_k\}_{k=1}^m$ of $\Xi$, which we assume fixed  throughout the paper, consisting of sets with nonempty and mutually disjoint interiors.  For two distributions $P$, $Q$ on $\Xi$, the set of \textit{local feasible vector discrepancies} between $Q$ and $P$ is given by
\begin{align*}
 & T_{p,\A_m}(Q,P):=\{(\eps_1,\ldots,\eps_m)\in\RgeO^m:\;\textup{there exists}\;\pi\in\Pi(Q,P) \\ 
 &\hspace{16.9em}\textup{s.t.}\;\int_{A_k\times \Xi}\rho(\zeta,\xi)^p d\pi(\zeta,\xi)\leq\eps_k \;\textup{for all}\;k\in[m]\Big\}.
\end{align*}
We will also refer to $T_{p,\A_m}(Q,P)$ as a \textit{vector discrepancy} for brevity, and to each of its elements as a \textit{local transport cost vector} between $Q$ and $P$.

Based on the local feasible vector discrepancy, we introduce a new ambiguity set. It comprises all distributions $P$ for which there exists a local transport cost vector between a reference distribution $Q$ and $P$, which respects a vector-valued transport budget $\bm\eps$. To this end, we consider the set of couplings 
\begin{align*}
    \Pi_{p,\A_m}(Q,\bm{\eps}) = \Big\{\pi\in\Pi(Q):\int_{A_k\times \Xi} \rho(\zeta,\xi)^p d\pi(\zeta,\xi)\leq\eps_k \;\textup{for all}\;k\in[m]\Big\}
\end{align*}
and define the \textit{local discrepancy ambiguity set}
\begin{align*}
\lcAmbSet{Q}{\bm\eps} :={\rm{pr}}_{\#2}\Pi_{p,\A_m}(Q,\bm{\eps}),
\end{align*}
which consists of all the second marginals of the couplings in $\Pi_{p,\A_m}(Q,\bm{\eps})$. The following result provides basic properties of local discrepancy ambiguity sets and equivalent criteria to establish that a distribution belongs to such a set.

\begin{prop}
\label{prop:lcAmbSet certif}
\longthmtitle{Local discrepancy ambiguity set containment certificate}
For any $P\in\P_p(\Xi)$ and $\bm\eps\in\RgeO^m$, the ambiguity set $ \lcAmbSet{Q}{\bm\eps}$ is convex and 
\begin{align}
P\in \lcAmbSet{Q}{\bm\eps} \iff
T_{p,\A_m}(Q,P)\cap[0,\bm\eps]\ne \emptyset. \label{local:discrepnacy:certificate}
\end{align}
Let also $\lambda_\star$ be the optimal value of the problem  
\begin{align*}
& \inf_{\lambda \geq 0,\pi\in \Pi(Q,P)} \lambda \\
&\qquad\;\; {\rm s.t. } \int_{A_k \times \Xi} \rho(\zeta,\xi)^p d\pi(\zeta,\xi) \leq \lambda\,\eps_k\;\textup{for all}\;k\in[m].
\end{align*}
Then 
\begin{align*}
        P\in \lcAmbSet{Q}{\bm\eps} & \Longrightarrow  \lambda_\star\le 1 \\
        \textup{and}\;\lambda_\star<1 & \Longrightarrow 
        P\in \lcAmbSet{Q}{\bm\eps}.
    \end{align*}    
\end{prop}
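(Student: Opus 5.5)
The plan is to prove the three assertions in order (convexity, the certificate \eqref{local:discrepnacy:certificate}, and the two implications on $\lambda_\star$), since the latter two follow almost immediately from unfolding the definitions of $\Pi_{p,\A_m}(Q,\bm\eps)$ and $T_{p,\A_m}(Q,P)$.

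\emph{Convexity.} Take $P_1,P_2\in\lcAmbSet{Q}{\bm\eps}$ and $t\in[0,1]$. By definition there are $\pi_1,\pi_2\in\Pi_{p,\A_m}(Q,\bm\eps)$ with ${\rm pr}_{2\#}\pi_i=P_i$. Form $\pi:=t\pi_1+(1-t)\pi_2$. Both marginal maps are linear in the measure, so ${\rm pr}_{1\#}\pi=tQ+(1-t)Q=Q$ and ${\rm pr}_{2\#}\pi=tP_1+(1-t)P_2$. The constraint integrals are linear in the coupling, which gives
\begin{align*}
\int_{A_k\times\Xi}\rho^p\,d\pi=t\int_{A_k\times\Xi}\rho^p\,d\pi_1+(1-t)\int_{A_k\times\Xi}\rho^p\,d\pi_2\le\eps_k
\end{align*}
for every $k$. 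Hence $\pi\in\Pi_{p,\A_m}(Q,\bm\eps)$, and the mixture $tP_1+(1-t)P_2$ lies in the set.

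\emph{Certificate.} For ($\Rightarrow$), if $P\in\lcAmbSet{Q}{\bm\eps}$, pick $\pi\in\Pi_{p,\A_m}(Q,\bm\eps)$ with second marginal $P$. Then $\pi\in\Pi(Q,P)$, and the vector with entries $\int_{A_k\times\Xi}\rho^p\,d\pi$ belongs to $T_{p,\A_m}(Q,P)$. Each entry is nonnegative and bounded by $\eps_k$, so this vector lies in $[0,\bm\eps]$. For ($\Leftarrow$), take $\bm\eps'\in T_{p,\A_m}(Q,P)\cap[0,\bm\eps]$ together with the coupling $\pi\in\Pi(Q,P)$ that witnesses it. Its local costs are at most $\eps'_k\le\eps_k$, so $\pi\in\Pi_{p,\A_m}(Q,\bm\eps)$ and ${\rm pr}_{2\#}\pi=P$.

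\emph{Implications for $\lambda_\star$.} If $P\in\lcAmbSet{Q}{\bm\eps}$, the witnessing coupling together with $\lambda=1$ is feasible, so $\lambda_\star\le1$. Conversely, suppose $\lambda_\star<1$. By the definition of the infimum there is a feasible pair $(\lambda,\pi)$ with $\lambda<1$, and then $\int_{A_k\times\Xi}\rho^p\,d\pi\le\lambda\eps_k\le\eps_k$ for all $k$. Thus $\pi\in\Pi_{p,\A_m}(Q,\bm\eps)$ with second marginal $P$.

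The only delicate point is why the statement uses a strict inequality in the second implication. When $\lambda_\star=1$, the infimum need not be attained, and attainment would require weak compactness of $\Pi(Q,P)$ together with lower semicontinuity of the constraint integrals over the sets $A_k\times\Xi$, which may fail when the $A_k$ are not closed. I would therefore avoid that issue entirely and argue only by approximation of the infimum, as above. I would also note that the finite $p$th moments of $P$ and $Q$ guarantee that the problem is feasible for some finite $\lambda$ whenever every $\eps_k>0$. If some $\eps_k=0$, the infimum may be $+\infty$ and the implications hold trivially.
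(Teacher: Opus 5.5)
Your proof is correct and follows the paper's argument step for step. Convexity comes from mixing couplings and the linearity of the constraints, the certificate from unfolding the definitions, the first implication from feasibility of $\lambda=1$, and the second from a feasible pair with $\lambda<1$. Your closing remark on attainment plays no role in the proof, but it is slightly off. The integrand $\bone_{A_k}(\zeta)\rho(\zeta,\xi)^p$ is lower semicontinuous when $A_k$ is \emph{open}, not when it is closed. Moreover, because both marginals are fixed on $\Pi(Q,P)$, the constraint integrals are lower semicontinuous there for any Borel $A_k$, so the infimum is in fact attained whenever it is finite.
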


\begin{proof}
Convexity follows directly by linearity of the constraints with respect to the transport plans. The certificate~\eqref{local:discrepnacy:certificate} is also a rather direct consequence of the definitions of the ambiguity set and the vector discrepancy.

To obtain the first implication regarding $\lambda_\star$, let $P\in \lcAmbSet{Q}{\bm\eps}$. Then there exists $\pi \in \Pi(Q,P)$ such that $\int_{A_k \times \Xi}\rho(\zeta,\xi)^p d\pi(\zeta,\xi)\leq\eps_k$ for all $k\in[m]$. This implies  that $\lambda=1$ is feasible for the certificate optimization problem and therefore $\lambda_\star \le 1$. 

For the other implication, let  $\lambda_\star<1$. Then there  exists a feasible $\lambda\le 1$ and a corresponding feasible transport plan $\pi\in \Pi(Q,P)$ with $\int_{A_k \times \Xi}\rho(\zeta,\xi)^p d\pi(\zeta,\xi)\leq \lambda \eps_k\leq\eps_k$ for all $k\in[m]$ This implies that $P\in \lcAmbSet{Q}{\bm\eps}$ and concludes the proof.
\end{proof}

This characterization gives us also an optimization-based criterion to find the smallest size of a local discrepancy ambiguity set that will contain a desired distribution, when we fix the relative magnitudes of the local transport  budgets, i.e., when we require that $\bm\eps=
\lambda\bm v$ for some fixed direction vector $\bm v\in\mathbb S^{m-1}\cap\RgeO^m$.      

\begin{corollary} \label{corollary:minimum:size}
\longthmtitle{Minimum-size local discrepancy ambiguity set of fixed shape}
Consider the direction vector $\bm v\in\mathbb S^{m-1}\cap\RgeO^m$ and let $\lambda_\star\equiv\lambda_\star(\bm v;Q,P)$ be the solution of the optimization problem  
\begin{align*}
& \inf_{\lambda \geq 0,\pi\in \Pi(Q,P)} \lambda \\
&\qquad\;\; {\rm s.t. } \int_{A_k \times \Xi} \rho(\zeta,\xi)^p d\pi(\zeta,\xi) \leq \lambda v_k\;\textup{for all}\;k\in[m].
\end{align*}
Then $P\in \lcAmbSet{Q}{\bm\eps}$ for all $\bm\eps=\lambda\bm v$ with  $\lambda>\lambda_\star$ and $P\notin \lcAmbSet{Q}{\bm\eps}$ for all $\bm\eps=\lambda\bm v$ with $\lambda<\lambda_\star$.
\end{corollary}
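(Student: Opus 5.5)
The argument follows the second part of Proposition~\ref{prop:lcAmbSet certif} with $\bm\eps$ replaced by the direction $\bm v$. The one new ingredient is that the constraint set scales monotonically in $\lambda$. We will argue directly from the definition of $\lambda_\star$ as an infimum. This is cleaner than rescaling $\bm\eps=\lambda\bm v$ and invoking the proposition, because when $v_k=0$ the rescaled problem with budget $\lambda\bm v$ is not simply a multiple of the original.

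\emph{First claim.} Fix $\lambda>\lambda_\star$ and set $\bm\eps=\lambda\bm v$. Since $\lambda_\star$ is an infimum and $\lambda>\lambda_\star$, there is a feasible pair $(\lambda',\pi)$ with $\lambda_\star\le\lambda'<\lambda$. Here $\pi\in\Pi(Q,P)$ and
\[
\int_{A_k\times\Xi}\rho(\zeta,\xi)^p\,d\pi(\zeta,\xi)\le\lambda' v_k\le\lambda v_k=\eps_k \quad\text{for all } k\in[m],
\]
where the middle inequality uses $v_k\ge0$. Thus $\pi\in\Pi_{p,\A_m}(Q,\bm\eps)$ and ${\rm pr}_{2\#}\pi=P$, so $P\in\lcAmbSet{Q}{\bm\eps}$. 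Equivalently, the vector of local costs of $\pi$ lies in $T_{p,\A_m}(Q,P)\cap[0,\bm\eps]$, and \eqref{local:discrepnacy:certificate} applies. If $\lambda_\star=+\infty$, the feasible set is empty and this claim is vacuous.

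\emph{Second claim.} Argue by contraposition. Suppose $P\in\lcAmbSet{Q}{\bm\eps}$ with $\bm\eps=\lambda\bm v$ for some $\lambda\ge0$. Then there is $\pi\in\Pi_{p,\A_m}(Q,\bm\eps)$ with second marginal $P$, so $\pi\in\Pi(Q,P)$. This $\pi$ satisfies $\int_{A_k\times\Xi}\rho^p\,d\pi\le\lambda v_k$ for all $k$. Hence $(\lambda,\pi)$ is feasible for the optimization problem, which gives $\lambda_\star\le\lambda$. So whenever $\lambda<\lambda_\star$, we get $P\notin\lcAmbSet{Q}{\bm\eps}$.

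The only delicate point is the first claim. Attainment of the infimum is not known, since no compactness of the couplings has been established. For this reason we use a strictly smaller feasible $\lambda'$ rather than $\lambda_\star$ itself, and this is exactly why the statement uses strict inequalities in both directions. The degenerate components with $v_k=0$ need no special care: the monotonicity $\lambda'v_k\le\lambda v_k$ still holds.
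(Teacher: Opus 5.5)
Your proof is correct and is essentially the paper's argument. The paper states the corollary without a separate proof, as an immediate consequence of Proposition~\ref{prop:lcAmbSet certif}, and that proposition's proof consists of exactly your two feasibility steps (an infimum-approximating feasible pair for $\lambda>\lambda_\star$, and feasibility of $(\lambda,\pi)$ giving $\lambda_\star\le\lambda$), now applied with budget $\lambda\bm v$.

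One correction to your motivating remark: rescaling does work when some $v_k=0$. For $\lambda>0$, the map $(\mu,\pi)\mapsto(\mu\lambda,\pi)$ is a bijection between the feasible pairs of the proposition's problem with $\bm\eps=\lambda\bm v$ and those of the corollary's problem, so the former has optimal value $\lambda_\star/\lambda$. Only $\lambda=0$ is degenerate, and that case is trivial.
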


\section{Tractable reformulations}

Here we obtain tractable reformulations of DRO problems over local-discrepancy ambiguity sets.
By fixing the decision variable $x\in\X$ in the DRO problem \eqref{eq:DRO} and denoting $h(\xi)\equiv f(x,\xi)$, the key step towards its reformulation is to transform the worst-case expectation 
\begin{align}
 \mathcal I^\star:= \sup_{P\in \P}\mathbb{E}_P[h(\xi)]
    \label{worst case expectation}
\end{align}
into a finite-dimensional optimization problem using duality. To achieve this, we exploit the fact that local-discrepancy ambiguity sets are formulated via a finite set of inequality constraints over the space of probability distributions on $\Xi\times\Xi$, and leverage the strong duality results developed in \cite{chaouach2023structured} for this type of problem.  
We next present the corresponding results and assumptions from \cite{chaouach2023structured}, which considers $m$ different  cost functions $c_k:\Xi\times\Xi\to\mathbb{R}_{\ge0}$, $k\in[m]$.  

\begin{assumption}
	\label{assumption:h}
\longthmtitle{Objective function class} 
The objective function $h:\Xi\to\mathbb{R}$ is upper semicontinuous and integrable with respect to the distribution $Q$, namely, $h\in L^1(Q)$.
\end{assumption}

\begin{assumption}
\label{assumption:cost:functions}
\longthmtitle{Transport costs} 
(i) For each $k\in[m]$  there exists a nondecreasing sequence
$c_{k,\nu}:\Xi\times\Xi\to\mathbb{R}_{\ge0}$, $\nu\in\mathbb{N}$ of \textit{continuous} transport cost functions with $c_{k,\nu}(\zeta,\zeta)=0$ for all $\zeta$ and $c_{k,\nu}(\zeta,\xi)\to c_k(\zeta,\xi)\in\mathbb{R}_{\ge0}$. 

\noindent (ii) There exists a compact set $\Xi_{\rm cmp}\subset\Xi$ such that for any $\nu$, the functions $c_{k,\nu}$, $k\in[m]$ are linearly independent in $C(\Xi_{\rm cmp}\times\Xi_{\rm cmp})$ and $\text{span}\{c_{1,\nu},\ldots,c_{m,\nu}\}\cap C_{2,\text{const}}(\Xi_{\rm cmp}\times\Xi_{\rm cmp})=\{0\}$.
\end{assumption}

\begin{thm}
\label{thm:strong:duality}
\longthmtitle{DRO dual over ambiguity hyperrectangles \cite[Theorem 6.4]{chaouach2023structured}} 
Let $h$ and $c_1,\ldots,c_m$ satisfy Assumptions \ref{assumption:h} and \ref{assumption:cost:functions}, and consider the problem \eqref{worst case expectation} 
with $\P\equiv {\rm pr}_{2\#}\Pi(Q,\bm\epsilon)$, where 
$\Pi(Q,\bm\epsilon):=\{\pi\in\Pi(Q):\int_{\Xi\times\Xi} c_k(\zeta,\xi)d\pi(\zeta,\xi)\le\epsilon_k\;\textup{for all}\;k\in[m]$\}. Then 
\begin{align*}
   \mathcal I^\star=\inf_{\bm\lambda\succeq0}\Big\{\langle\bm\lambda, \bm\epsilon\rangle+\int_{\Xi}\sup_{\xi\in\Xi}\{h(\xi)-\langle \bm\lambda,\bm c(\zeta,\xi) \rangle \}dQ(\zeta)\Big\}.
\end{align*}
\end{thm}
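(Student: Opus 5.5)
The plan is a Lagrangian duality argument for the moment-constrained problem over couplings, closed by a strong-duality step on the multipliers. First, I rewrite $\mathcal I^\star$ as a linear program over couplings. Since $\P={\rm pr}_{2\#}\Pi(Q,\bm\epsilon)$, we have
\begin{align*}
\mathcal I^\star=\sup_{\pi\in\Pi(Q,\bm\epsilon)}\int_{\Xi\times\Xi}h(\xi)\,d\pi(\zeta,\xi).
\end{align*}
This is the maximization of a linear functional over $\Pi(Q)$ subject to $m$ linear moment constraints. Introducing multipliers $\bm\lambda\succeq0$ gives the Lagrangian
\begin{align*}
L(\pi,\bm\lambda)=\langle\bm\lambda,\bm\epsilon\rangle+\int (h(\xi)-\langle\bm\lambda,\bm c(\zeta,\xi)\rangle)\,d\pi.
\end{align*}
Weak duality $\mathcal I^\star\le\inf_{\bm\lambda}\sup_{\pi\in\Pi(Q)}L$ is immediate.

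Second, I evaluate the inner supremum over $\Pi(Q)$ by disintegration. Any $\pi\in\Pi(Q)$ can be written as $d\pi=dQ(\zeta)\,K(\zeta,d\xi)$, so
\begin{align*}
\sup_{\pi\in\Pi(Q)}\int(h-\langle\bm\lambda,\bm c\rangle)\,d\pi=\int\sup_\xi\{h(\xi)-\langle\bm\lambda,\bm c(\zeta,\xi)\rangle\}\,dQ(\zeta).
\end{align*}
The inequality "$\le$" is trivial. For "$\ge$", I would use a measurable selection of $\varepsilon$-maximizers: the map $\zeta\mapsto\sup_\xi\{\cdot\}$ is measurable (or at least universally measurable), because $h$ is u.s.c. and the costs are limits of continuous functions. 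The selection yields Dirac kernels $K(\zeta,\cdot)=\delta_{\xi_\varepsilon(\zeta)}$. The case where the supremum equals $+\infty$ on a set of positive $Q$-measure also needs handling; there both sides are $+\infty$. The condition $h\in L^1(Q)$ guarantees the integrand is bounded below by $h(\zeta)$, since $c_k(\zeta,\zeta)=0$ lets us take $\xi=\zeta$. This makes the integral well defined.

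Third, and this is the main obstacle, I must show that there is no duality gap. I would first prove it for the continuous approximants $c_{k,\nu}$. Consider the value function $v(\bm\epsilon)$, which is concave and nondecreasing in $\bm\epsilon\in\RgeO^m$. The linear-independence condition in Assumption~\ref{assumption:cost:functions}(ii), including the exclusion of $C_{2,\rm const}$, is what should yield a Slater-type interiority. It ensures that the moment map $\pi\mapsto(\int c_{k,\nu}d\pi)_k$ over $\Pi(Q)$ has image with nonempty interior around the relevant points, and it rules out degenerate directions in which the multipliers can escape to infinity. With this, finite-dimensional conic/convex duality (a supporting hyperplane to the hypograph of $v$) gives strong duality for each $\nu$. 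The boundary case $\epsilon_k=0$ would need a separate limiting argument, using monotonicity and upper semicontinuity of $v$.

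Finally, I pass $\nu\to\infty$. Since $c_{k,\nu}\uparrow c_k$, the feasible sets $\Pi_\nu(Q,\bm\epsilon)$ decrease to $\Pi(Q,\bm\epsilon)$ by monotone convergence. The dual objectives increase in $\nu$ for fixed $\bm\lambda$. I would combine a compactness argument on near-optimal couplings with upper semicontinuity of $h$ to show that the primal values converge. Tightness of these couplings follows from the first marginal being fixed to $Q$, together with Prokhorov's theorem after suitable truncation of the second marginals via the cost bound. On the dual side, monotone convergence inside the integral and an interchange of $\inf$ and $\lim$ give convergence of the dual values. This interchange is justified by the boundedness of minimizing $\bm\lambda$'s, which again comes from Assumption~\ref{assumption:cost:functions}(ii). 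Matching the two limits yields the stated formula.
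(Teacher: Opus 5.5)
The paper does not prove this statement; it imports it from \cite[Theorem 6.4]{chaouach2023structured}. The shape of Assumption~\ref{assumption:cost:functions} suggests a conic-duality argument there: continuous approximants, plus a condition on a fixed compact set excluding combinations of the $c_k$ that depend on $\zeta$ alone. In such an argument the marginal constraint ${\rm pr}_{1\#}\pi=Q$ is dualized as well, with a multiplier $\varphi(\zeta)$, presumably on compact sets where weak compactness makes approximation valid. You borrow the approximation scaffolding but take the limit $\nu\to\infty$ directly on the non-compact $\Xi$, and that step fails.

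First, the monotonicity is reversed. Since $c_{k,\nu}\uparrow c_k$ and $\bm\lambda\succeq0$, the dual objectives \emph{decrease} in $\nu$. So exchanging $\inf_{\bm\lambda}$ and $\lim_\nu$ is automatic, and bounds on the multipliers are not what is needed. The real issues are whether $\lim_\nu\sup_\xi\{h(\xi)-\langle\bm\lambda,\bm c_\nu(\zeta,\xi)\rangle\}$ equals $\sup_\xi\{h(\xi)-\langle\bm\lambda,\bm c(\zeta,\xi)\rangle\}$, and, on the primal side, whether $\lim_\nu v_\nu(\bm\epsilon)=v(\bm\epsilon)$. Both can fail. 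Take $\Xi=\mathbb R$, $m=1$, $Q=\delta_0$, $h(\xi)=|\xi|/2$, $c(\zeta,\xi)=|\zeta-\xi|$, and $c_\nu(\zeta,\xi)=|\zeta-\xi|\,\psi_\nu(|\zeta-\xi|)$. Here $\psi_\nu$ is continuous, equal to $1$ on $[0,\nu]$, equal to $0$ on $[\nu+1,\infty)$, and affine in between. This is a nondecreasing continuous sequence satisfying Assumption~\ref{assumption:cost:functions}. Yet moving mass to distance at least $\nu+1$ is free under $c_\nu$, so for any $\epsilon>0$ every approximating primal and dual value is $+\infty$, while the true value is $\epsilon/2$. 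Correspondingly, tightness ``via the cost bound'' is unavailable, since the $c_{k,\nu}$ need not be coercive. Also, upper semicontinuity of $h$ does not make $\pi\mapsto\int h\,d\pi$ upper semicontinuous when $h$ is unbounded above.

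The detour is also unnecessary: your third step works verbatim for the original costs $c_k$ and already proves the theorem for $\bm\epsilon\succ0$. The diagonal coupling $({\rm id},{\rm id})_\#Q$ is strictly feasible, because $c_k(\zeta,\zeta)=\lim_\nu c_{k,\nu}(\zeta,\zeta)=0$, and $\int h\,dQ>-\infty$. Hence the concave, nondecreasing value function $v$ is either $+\infty$ on $\mathbb R^m_{>0}$, in which case weak duality closes the argument, or finite there. In the finite case it has a supergradient $\bm\lambda\succeq0$ at $\bm\epsilon$, which gives $\sup_{\pi\in\Pi(Q)}L(\pi,\bm\lambda)\le v(\bm\epsilon)$. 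Combine this with your interchangeability step. That step only needs Borel measurability of $(\zeta,\xi)\mapsto h(\xi)-\langle\bm\lambda,\bm c(\zeta,\xi)\rangle$, which holds because $h$ is u.s.c.\ and each $c_k$ is l.s.c.

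This is the Lagrangian/interchangeability route used for Wasserstein DRO by Zhang, Yang and Gao. It is more elementary than the cited approach and uses neither continuity of the costs nor Assumption~\ref{assumption:cost:functions}(ii). In particular, interiority comes from $\bm\epsilon\succ0$ and the diagonal coupling, not from linear independence as you suggest. Condition (ii) is the regularity suited to the cited approach, where ${\rm pr}_{1\#}\pi=Q$ is dualized too.

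Finally, the boundary case $\epsilon_k=0$ cannot be recovered from upper semicontinuity of $v$, because that property can fail. With $m=1$, $Q=\delta_0$, $c(\zeta,\xi)=|\zeta-\xi|$ and $h(\xi)=\xi^2$, the primal value at $\epsilon=0$ is $0$. The dual value at $\epsilon=0$, and $v(\epsilon)$ for every $\epsilon>0$, are $+\infty$. The formula should therefore be read for $\bm\epsilon\succ0$, which is exactly the Slater regime.
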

Using these reformulations, we also obtain duality results for DRO problems over local-discrepancy ambiguity sets.
For technical convenience, we \textit{henceforth restrict our attention to the case where $\Xi$ is a closed subset of $\Rat{d}$ satisfying ${\rm cl}_{\Rat{d}}({\rm int}_{\Rat{d}}(\Xi))=\Xi$} and equipped with the restriction of a metric $\rho$ on $\Rat{d}$. Here the $\Rat{d}$-subscript signifies that the topological operations are taken with respect to the topology of $\Rat{d}$ and $\Xi$ viewed as a subset of it. We further assume that \textit{$\Rat{d}$ with the metric $\rho$ is proper, i.e., every closed and bounded subset is compact, and that $\rho$ induces the standard topology on $\Rat{d}$}. As before, we consider a covering $\A_m$ of $\Xi$ by sets with nonempty and mutually disjoint interiors. We also need the further assumption that the reference distribution has zero measure on the boundaries of the sets comprising the covering. This holds always for absolutely continuous distributions on $\Rat{d}$, and almost always for their empirical distributions, which lie in the interior of the covering with probability one, assuming no ``pathological" boundaries of positive Lebesgue measure.

\begin{assumption} \label{assumption:boundary:mass}
\longthmtitle{Probability mass on covering boundary} 
The reference distribution $Q$ satisfies $Q({\rm bdry}(A_k))=0$ for each $k\in[m]$.  
\end{assumption}

\begin{thm} \label{thm:dual:local:discrepancy}

\longthmtitle{DRO dual over local-discrepancy ambiguity sets} 
Under Assumptions~\ref{assumption:h} and~\ref{assumption:boundary:mass}, the worst-case expectation \eqref{worst case expectation} over the local-discrepancy ambiguity set $\lcAmbSet{Q}{\bm\eps}$ is equal to the value of the dual problem 
\begin{align*}
\inf_{\bm\lambda\succeq0}\Big\{
\langle \bm\lambda,\bm\eps \rangle +\sum_{k=1}^m\int_{A_k}\sup_{\xi\in\Xi}\big\{  h(\xi)-
\lambda_{k}\rho(\zeta,\xi)^p\big\}  dQ(\zeta)\Big\}.
\end{align*}
In particular, when $Q\equiv P^n$, we have the dual reformulation  
\begin{align*}
\inf_{\bm\lambda\succeq0}\Big\{\langle \bm\lambda,\bm\eps \rangle+\frac{1}{n}\sum_{k=1}^m\sum_{i=1}^{n_k}\sup_{\xi\in\Xi}\big\{  h(\xi)-
\lambda_{k}\rho(\xi_{ki},\xi)^p \big\}\Big\},
\end{align*}
where $n_k$ is the number of samples in region $A_k$ and $\xi_{ki}$ is the $i$th sample therein.
\end{thm}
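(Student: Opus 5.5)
The plan is to cast the local-discrepancy ambiguity set as an instance of Theorem~\ref{thm:strong:duality} with cost functions $c_k(\zeta,\xi)=\bone_{A_k}(\zeta)\rho(\zeta,\xi)^p$. Then
\[
\int_{A_k\times\Xi}\rho^p\,d\pi=\int c_k\,d\pi,
\]
so $\Pi_{p,\A_m}(Q,\bm\eps)=\Pi(Q,\bm\eps)$ and $\lcAmbSet{Q}{\bm\eps}={\rm pr}_{2\#}\Pi(Q,\bm\eps)$. Once the theorem applies, the dual integrand is handled by splitting $\Xi$ into the cells $A_k$. For $\zeta\in{\rm int}(A_k)$ we have $\langle\bm\lambda,\bm c(\zeta,\xi)\rangle=\lambda_k\rho(\zeta,\xi)^p$, because the interiors are disjoint and hence $\zeta$ lies in no other $A_j$ (a point of ${\rm int}(A_k)\cap A_j$ would force the interiors to meet only if it lay in ${\rm int}(A_j)$, so one argues instead via boundaries below). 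The remaining points lie on $\bigcup_k{\rm bdry}(A_k)$, which is $Q$-null by Assumption~\ref{assumption:boundary:mass}. Since the $A_k$ overlap only on boundaries, the integral over $\Xi$ equals $\sum_k\int_{A_k}\sup_\xi\{h(\xi)-\lambda_k\rho(\zeta,\xi)^p\}\,dQ(\zeta)$. The empirical formula follows by substituting $Q=P^n$, whose atoms lie in the interiors.

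The real work is verifying Assumption~\ref{assumption:cost:functions}, since $c_k$ is discontinuous across ${\rm bdry}(A_k)$. For part (i), I would build continuous nondecreasing approximations $c_{k,\nu}(\zeta,\xi)=\phi_{k,\nu}(\zeta)\min\{\rho(\zeta,\xi)^p,\nu\}$. Here $\phi_{k,\nu}\in[0,1]$ is continuous, vanishes outside ${\rm int}(A_k)$, increases in $\nu$, and converges pointwise to $\bone_{{\rm int}(A_k)}$; a concrete choice is $\phi_{k,\nu}(\zeta)=\min\{1,\nu\,{\rm dist}(\zeta,\Xi\setminus{\rm int}A_k)\}$. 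These vanish on the diagonal as required. Their limit is $\bone_{{\rm int}(A_k)}(\zeta)\rho^p$ rather than $c_k$, so I would in fact apply Theorem~\ref{thm:strong:duality} with $\tilde c_k=\bone_{{\rm int}(A_k)}\rho^p$. The two versions agree $Q$-a.e. in $\zeta$, hence $\pi$-a.e. for every $\pi\in\Pi(Q)$, because the first marginal of $\pi$ is $Q$ and $Q$ does not charge the boundaries. Both the constraint sets and the dual values are therefore unchanged.

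For part (ii), choose points $z_k\in{\rm int}(A_k)$ and a small radius $r$ so that the balls $B(z_k,r)$ lie in ${\rm int}(A_k)$; this uses ${\rm cl}({\rm int}\,\Xi)=\Xi$ and that $\rho$ induces the standard topology. Take $\Xi_{\rm cmp}$ to be the union of closed such balls, which is compact because the metric is proper. Now suppose $\sum_k a_k c_{k,\nu}(\zeta,\xi)=g(\zeta)$ on $\Xi_{\rm cmp}\times\Xi_{\rm cmp}$ for some function $g$ of $\zeta$ alone. Setting $\xi=\zeta$ gives $g\equiv0$. Then choosing $\zeta$ near $z_k$ with $\phi_{k,\nu}(\zeta)>0$ (valid for $\nu$ large; for small $\nu$ shrink $r$ or adjust the indexing) and $\xi\neq\zeta$ forces $a_k=0$. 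This gives both linear independence and trivial intersection with $C_{2,{\rm const}}$. The main obstacle is this compatibility of the smoothing with (ii) uniformly in $\nu$. I would handle it by starting the sequence at an index where every $\phi_{k,\nu}$ is positive on $B(z_k,r)$, which is harmless because only the tail of the sequence matters.
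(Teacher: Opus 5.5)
Your proposal is correct and follows essentially the same route as the paper. It realizes the set via the costs $\bone_{A_k}(\zeta)\rho(\zeta,\xi)^p$ and replaces them $Q$-a.e.\ by $\bone_{{\rm int}(A_k)}(\zeta)\rho(\zeta,\xi)^p$ using Assumption~\ref{assumption:boundary:mass}. It approximates these by continuous cut-offs in $\zeta$ times $\rho^p$, checks Assumption~\ref{assumption:cost:functions}(ii) by evaluating at a point lying in a single interior, and splits the dual integral over the cells. Your explicit cut-off $\min\{1,\nu\,{\rm dist}(\zeta,\Xi\setminus{\rm int}A_k)\}$ replaces the paper's Urysohn/compact-exhaustion construction and is already positive at $z_k$ for every $\nu\ge 1$, so the index shifting you flag as the main obstacle is unnecessary. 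Your substitution $\xi=\zeta$ also handles the $C_{2,{\rm const}}$ condition more cleanly than the paper's ``in a similar manner''.
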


\begin{proof}
Note that the local discrepancy ambiguity set has the form $\lcAmbSet{Q}{\bm\eps}\equiv{\rm pr}_{2\#}\Pi(Q,\bm{\eps})$, with $\Pi(Q,\bm\eps)$ as given in the statement of Theorem~\ref{thm:strong:duality} and the costs $c_k(\zeta,\xi) :=\bone_{A_k\times \Xi}(\zeta,\xi)\rho(\zeta,\xi)^p$. Taking further into account that the reference distribution $Q$ is supported on union of the interiors of the sets $A_k$ by Assumption~\ref{assumption:boundary:mass}, and therefore $\pi(\partial A_k\times\Xi)=0$ for any coupling $\pi\in\Pi(Q,P)$, we can equivalently consider the costs 
\begin{align*}
c_k(\zeta,\xi) :=\bone_{{\rm int}(A_k)\times \Xi}(\zeta,\xi)\rho(\zeta,\xi)^p=\bone_{{\rm int}(A_k)}(\zeta)\rho(\zeta,\xi)^p.
\end{align*}

We next show that these costs satisfy Assumption~\ref{assumption:cost:functions}. To this end, let  $\xi_0\in\Xi$ and pick a sufficiently large radius $r>0$ such that any set ${\rm int}(A_k)$ has nonempty intersection with ${\rm int}(B(\xi_0,r))$, where $B(\xi_0,r)$
denotes the closed ball of center $\xi_0$ and radius $r$ in $\Xi$ in the metric $\rho$. Since $\Xi$ has the subspace topology of $\Rat{d}$, we can pick for each $k$ an increasing sequence of compact sets $K_{k,\nu}$ such that 
\begin{align*}
\cup_{\nu=1}^\infty K_{k,\nu}={\rm int}(A_k).
\end{align*}
For example, we may take 
\begin{align*}
K_{k,\nu}:=\{\xi\in\Xi:\rho(\xi_0,\xi)\le\nu\;\text{and}\;\rho(\xi,\Xi\setminus{\rm int}(A_k))\ge 1/\nu\}.
\end{align*}
Then, by Urysohn's theorem (see e.g., \cite[Page 32]{VIB-OGS:20}), we can select for each $k$, $\nu$ a continuous function $\phi_{k,\nu}:\Xi\to[0,1]$ with $\phi_{k,\nu}(\xi)=1$ for all $\xi\in K_{k,\nu}$ and $\phi_{k,\nu}(\xi)=0$ for all $\xi\in \Xi\setminus{\rm int}(A_k)$. Setting $\psi_{k,\nu}=\max_{\ell\le\nu}\phi_{k,\ell}$, it follows that    
\begin{align*}
c_{k,\nu}(\zeta,\xi)=\psi_{k,\nu}(\zeta)\rho(\zeta,\xi)^p
\end{align*}
is a sequence of continuous and nondecreasing functions with 
\begin{align*}
c_{k,\nu}(\zeta,\xi)\to c_k(\zeta,\xi)(=\bone_{{\rm int}(A_k)}(\zeta)\rho(\zeta,\xi)^p),
\end{align*}
therefore satisfying Assumption~\ref{assumption:cost:functions}(i). Recalling that for the selected $r>0$ we have ${\rm int}(A_k)\cap {\rm int}(B(\xi_0,r))\ne\emptyset$, we may assume without loss of generality that also 
\begin{align*}
{\rm int}(K_{k,\nu})\cap {\rm int}(B(\xi_0,r))\ne\emptyset
\end{align*}
for all $\nu\in\mathbb N$ (by possibly neglecting a finite number of initial indices $\nu$ and starting indexing the sets $K_{k,\nu}$ from there on). This implies that the functions $c_{k,\nu}$ are linearly independent on $C(\Xi_{\rm cmp}\times \Xi_{\rm cmp})$ with $\Xi_{\rm cmp}\equiv B(\xi_0,r)$.

Indeed, assume on the contrary that $\sum_{k=1}^m a_kc_{k,\nu}\equiv 0$ with $a_k\ne 0$ for some $k\in[m]$, and let $\zeta\in {\rm int}(K_{k,\nu})\cap {\rm int}(B(\xi_0,r))$, which yields 
\begin{align} \label{ball:in:interior:intersection} 
B_{\Rat{d}}(\zeta,\epsilon)\cap\Xi\subset 
{\rm int}(K_{k,\nu})\cap {\rm int}(B(\xi_0,r))
\end{align}
for some $\epsilon>0$, where the $\Rat{d}$-subscript in this case signifies that the ball is taken with respect to $\Rat{d}$. Since the sets ${\rm int}(A_k)$ are disjoint, we have that $c_{\ell,\nu}(\zeta,\xi)=0$ for all $\xi\in B(\xi_0,r)$ when $\ell\ne k$. Thus, since $a_k\ne 0$, we necessarily get that  $c_{k,\nu}(\zeta,\xi)=0$ for all $\xi\in B(\xi_0,r)$. We claim that $B_{\Rat{d}}(\zeta,\epsilon)\cap\Xi$ does not consist of a single element, since otherwise  we would have 
$B_{\Rat{d}}(\zeta,\epsilon)\setminus\{\zeta\}\cap\Xi=\emptyset$,
%
contradicting our assumption ${\rm cl}_{\Rat{d}}({\rm int}_{\Rat{d}}(\Xi))=\Xi$. This implies by \eqref{ball:in:interior:intersection} that there is some $\xi\in{\rm int}(K_{k,\nu})\cap {\rm int}(B(\xi_0,r))$ with $\xi\ne\zeta$, and thus that $c_{k,\nu}(\zeta,\xi)\ne 0$, which is a contradiction. In a similar manner one can show that the functions $c_{k,\nu}$ cannot yield a linear combination that is constant in the second argument as a function on $\Xi_{\rm cmp}\times \Xi_{\rm cmp}$. We thus conclude that Assumption~\ref{assumption:cost:functions}(ii) is also satisfied. 

Therefore, we get from Theorem~\ref{thm:strong:duality} that 
\begin{align*}
\sup_{P\in \lcAmbSet{Q}{\bm\eps} } \mathbb{E}_P[h(\xi)] & 
= \inf_{\bm\lambda\succeq0}\Big\{ 
\langle \bm\lambda,\bm\eps \rangle  +\int_{\zeta\in \Xi}\sup_{\xi\in\Xi}\Big\{ h(\xi)-
\sum_{k=1}^m\lambda_{k}c_k(\zeta,\xi) \Big\}dQ(\zeta)\Big\} \\
& =\inf_{\bm\lambda\succeq0}\Big\{ 
\langle \bm\lambda,\bm\eps \rangle +\int_{\zeta\in \Xi}\sup_{\xi\in\Xi}\Big\{  h(\xi)-
\sum_{k=1}^m\lambda_{k}\bone_{{\rm int}(A_k)}(\zeta)\rho(\zeta,\xi)^p\Big\}dQ(\zeta)\Big\} \\
& =\inf_{\bm\lambda\succeq0}\Big\{ 
\langle \bm\lambda,\bm\eps \rangle  +\sum_{\ell=1}^m\int_{\zeta\in A_\ell}\sup_{\xi\in\Xi}\Big\{ h(\xi) -\sum_{k=1}^m\lambda_{k}\bone_{{\rm int}(A_k)}(\zeta)\rho(\zeta,\xi)^p\Big\}dQ(\zeta)\Big\}. 
\end{align*}
Taking into account that $\sum_{k=1}^m\lambda_{k}\bone_{{\rm int}(A_k)}(\zeta)=\lambda_\ell$ for each $\zeta\in {\rm int}(A_\ell)$, and that the union of the boundaries of the sets $A_\ell$ have $Q$-measure zero, we obtain the first reformulation in the statement. The second reformulation follows directly by substituting $Q\equiv P^n$ in the first one.      
\end{proof}

Next we specialize to the case where $\Xi$ is a subset of $\Rat{d}$ equipped with some norm $\|\cdot\|$, to obtain analogous convex reformulations as in Wasserstein DRO~\cite{mohajerin2018data}. In analogy to~\cite{mohajerin2018data}, we  make the following assumption. 

\begin{assumption}
\label{assumption:h_j}
\longthmtitle{Concavity of constituent functions} 
The cost function $h$ can be written as the pointwise maximum of extended real-valued functions, namely $h(\xi) = \max_{j\in[J]} h_j(\xi)$ where $h_j:\Rat{d}\to \Bar{\mathbb{R}}$. Here  $\Xi$ is convex and closed, and all the functions $-h_j$ are convex, lower-semicontinuous,  proper, and not identically $-\infty$ on $
\Xi$.
\end{assumption}

\begin{proposition} \label{prop:tractable reformulation}
\longthmtitle{Convex reformulation} 
In addition to the assumptions of Theorem~\ref{thm:dual:local:discrepancy} let also Assumption \ref{assumption:h_j} hold. Then for $p=1$, the worst case expectation \eqref{worst case expectation} with $\mathcal P\equiv\lcAmbSet{1}{\empirical}{\bm\eps}$ is equal to the value of the convex program  
    \begin{align*}
&\inf_{\bm\lambda,s_{ki},z_{kij},\nu_{kij}} \langle \bm\lambda,\bm\eps\rangle + \frac{1}{n}\sum_{k=1}^m \sum_{i=1}^{n_k} s_{ki}\\
&\hspace{2.4em}\textup{s.t.} \;[-{h_j}]^*(z_{kij}-\nu_{kij}) + \sigma_{\Xi}(\nu_{kij}) - \langle z_{kij},\xi_{ki}  \rangle \leq s_{ki}\\
&\hspace{3.8em} \|z_{kij}\|_* \leq \lambda_k \\
&\hspace{6.4em} k\in[m],i\in[n_k], j\in[J].
    \end{align*}
\end{proposition}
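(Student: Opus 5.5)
We start from the second (empirical) reformulation in Theorem~\ref{thm:dual:local:discrepancy} with $p=1$ and $\rho(\zeta,\xi)=\|\zeta-\xi\|$. This gives
\begin{align*}
\mathcal I^\star=\inf_{\bm\lambda\succeq0}\Big\{\langle \bm\lambda,\bm\eps \rangle+\frac{1}{n}\sum_{k=1}^m\sum_{i=1}^{n_k}\sup_{\xi\in\Xi}\big\{ h(\xi)-\lambda_{k}\|\xi-\xi_{ki}\| \big\}\Big\}.
\end{align*}
The plan is to introduce epigraph variables $s_{ki}$ for each inner supremum, and then to evaluate each supremum in the same way as in~\cite{mohajerin2018data}. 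The only difference from the Wasserstein case is that the multiplier $\lambda_k$ depends on the region containing the sample $\xi_{ki}$; the structure of the argument is otherwise unchanged.

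The first step uses $h=\max_j h_j$ and rewrites the problem as minimizing $\langle\bm\lambda,\bm\eps\rangle+\frac1n\sum_{k,i}s_{ki}$ subject to
\begin{align*}
\sup_{\xi\in\Xi}\{h_j(\xi)-\lambda_k\|\xi-\xi_{ki}\|\}\le s_{ki}\quad\text{for all } k\in[m],\ i\in[n_k],\ j\in[J].
\end{align*}
The second step writes the norm in dual form, $\|\xi-\xi_{ki}\|=\sup_{\|z\|_*\le 1}\langle z,\xi-\xi_{ki}\rangle$. This turns each supremum into
\begin{align*}
\sup_{\xi\in\Xi}\inf_{\|z\|_*\le\lambda_k}\{h_j(\xi)-\langle z,\xi-\xi_{ki}\rangle\}.
\end{align*}

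The third step is the key one: interchanging the $\sup$ and the $\inf$. The objective is concave in $\xi$, since $h_j$ is concave, and linear in $z$, and the $z$-set $\{\|z\|_*\le\lambda_k\}$ is convex and compact. Sion's minimax theorem therefore applies, using upper semicontinuity of $h_j$ (from the lower semicontinuity of $-h_j$) and convexity of $\Xi$. This gives
\begin{align*}
\inf_{\|z\|_*\le\lambda_k}\Big\{\sup_{\xi\in\Xi}\{h_j(\xi)-\langle z,\xi\rangle\}+\langle z,\xi_{ki}\rangle\Big\}.
\end{align*}
The inner supremum equals $[-h_j+\chi_\Xi]^*(-z)$. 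Using the infimal convolution formula for the conjugate of a sum, justified by properness and the standing qualification that $-h_j$ is not identically $\infty$ on $\Xi$ and $\Xi$ is closed convex (as in~\cite[Thm.~4.2]{mohajerin2018data}), this conjugate equals $\inf_\nu\{[-h_j]^*(-z-\nu)+\sigma_\Xi(\nu)\}$. After the sign change $z\mapsto -z$ this yields the constraint term $[-h_j]^*(z-\nu)+\sigma_\Xi(\nu)-\langle z,\xi_{ki}\rangle$, with $\|z\|_*\le\lambda_k$.

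The fourth step absorbs the infima over $z$ and $\nu$ into the outer minimization. For each triple $(k,i,j)$ we introduce variables $z_{kij},\nu_{kij}$; this is valid because an inequality of the form $\inf(\cdot)\le s_{ki}$ inside a minimization can be replaced by the existence of feasible witnesses without changing the optimal value. Convexity of the resulting program follows because conjugates and support functions are convex and the dual-norm constraints are convex.

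We expect the main obstacle to be the rigorous justification of the minimax interchange and of the conjugate-of-sum (exactness of the infimal convolution) step. We would handle both by verbatim appeal to the corresponding lemmas in~\cite{mohajerin2018data}, since each $(k,i)$ subproblem is identical to theirs with $\lambda$ replaced by $\lambda_k$.
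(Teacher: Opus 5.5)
Your proposal is correct and takes the same route as the paper. The paper's proof introduces the epigraph variables $s_{ki}$ in the empirical dual of Theorem~\ref{thm:dual:local:discrepancy} and defers to the argument of \cite[Theorem 4.2]{mohajerin2018data}; you have written that argument out (splitting the max over $j$, the dual-norm representation, the minimax interchange, and the conjugate of a sum) with $\lambda$ replaced by the region-dependent $\lambda_k$. One point to keep explicit when you absorb the infima is the constraint $\bm\lambda\succeq0$ inherited from the dual: it follows from $\|z_{kij}\|_*\le\lambda_k$ only for regions with $n_k\ge1$, so for a sample-free region with $\eps_k>0$, dropping it (as the displayed program does) could send the value to $-\infty$.
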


\begin{proof}
By introducing epigraphical
variables $s_{ki},\; k\in[m],\;i\in[n_k]$ on the right-hand side of the reformulation of Theorem~\ref{thm:dual:local:discrepancy} and following the same reasoning as in the proof of  \cite[Theorem 4.2]{mohajerin2018data} (see also \cite[Proposition 5.1]{chaouach2025tractable}) we obtain the desired result.
\end{proof}

\section{Distributionally robust location optimization }

Here, we provide tractable reformulations of a single facility optimal location problem. As a motivating example, consider the problem where we seek to optimally place a fire department in a city $\Xi\subset\Rat{2}$. In this setting, the department is responsible for a fixed part $B \subset \Xi$ of the urban area. Our goal is to place the fire department in such a way that it minimizes the average response time to incidents within its area of responsibility.
We denote by $a$ the location of the facility, where $a\in B$.
Then the optimal location problem becomes 
\begin{align*}
    \inf_{a\in B} \mathbb{E}_{P} [\bone_{B}(\xi)\| \xi-a \|_1]
\end{align*}
where incidents $\xi$ across the whole city are distributed according to $P$ and the $1$-norm between locations captures the rectangular layout of the city grid. Despite our 2D motivation, we formulate the problem for general dimensions $N$. 
We seek to solve its  distributionally robust version \begin{align}
    \inf_{a\in B} \sup_{P\in \lcAmbSet{1}{\empirical}{\bm\eps}} \mathbb{E}_P [\bone_{B}(\xi)\| \xi-a \|_1],
    \label{eq:DRO_coverage_single_agent}
\end{align}
where the distribution $P$ is unknown and lies in a local-discrepancy ambiguity set, defined over a covering $\A_m$ of~$\Xi$. 
Assuming that the set $B$ and the  space $\Xi$ are convex polyhedra, i.e., $B:= \{\xi\in\Rat{N}: C_B \xi\preceq d_B\}$ and $\Xi:=\{\xi\in\Rat{N}: C\xi\preceq d\}$, we provide a tractable reformulation of~\eqref{eq:DRO_coverage_single_agent}.

\begin{proposition}
\longthmtitle{Reformulation of robust facility location} The optimal value and location of the distributionally robust facility location problem  \eqref{eq:DRO_coverage_single_agent} are equivalently obtained by solving the optimization problem 
\begin{align*}
\inf_{\substack{a \in B, \lambda_k,s_{ki},z_{ki\eta},z_{ki}\\\gamma_{ki\eta},\delta_{ki\eta},\delta_{ki}\succeq 0}} & \langle \bm\lambda,\bm\eps\rangle + \frac{1}{n}\sum_{k=1}^m \sum_{i=1}^{n_k} s_{ki}
\\
{\rm s.t.} \hspace{2.5em}  & -\langle \eta,a \rangle + \langle   d_B,\gamma_{ki\eta}   \rangle +\langle \delta_{ki\eta},d \rangle - \langle z_{ki\eta},\xi_{ki}  \rangle \leq s_{ki}
\\ 
& \langle \delta_{ki}, d \rangle  - \langle z_{ki},\xi_{ki}  \rangle \leq s_{ki} \\
&  C_B^\top \gamma_{ki\eta} = z_{ki\eta}-C^\top \delta_{ki\eta}+\eta \\
&  C^\top \delta_{ki} = z_{ki}
\\
& \|z_{ki\eta}\|_* \leq \lambda_k ,\quad \|z_{ki}\|_* \leq \lambda_k \\
& k\in[m],\;i\in[n_k],\; \eta\in\{-1,1\}^N.
\end{align*}
\label{proposition: tractable reformulation for example}
\end{proposition}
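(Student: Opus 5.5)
We fix the decision $a\in B$ and apply Proposition~\ref{prop:tractable reformulation} with $p=1$ and $Q\equiv\empirical$. Then we take the infimum over $a\in B$ jointly with the dual variables; since $a$ enters the resulting constraints affinely, this yields a single joint program. The first task is to write $h(\xi)=\bone_B(\xi)\|\xi-a\|_1$ as a pointwise maximum of concave pieces, as required by Assumption~\ref{assumption:h_j}. Using $\|\xi-a\|_1=\max_{\eta\in\{-1,1\}^N}\langle\eta,\xi-a\rangle$ and $h\ge0$, we write
\begin{align*}
h(\xi)=\max\Big\{\max_{\eta\in\{-1,1\}^N} h_\eta(\xi),\; h_0(\xi)\Big\},\qquad h_\eta(\xi):=\langle\eta,\xi-a\rangle-\chi_B(\xi),\quad h_0(\xi):=0.
\end{align*}
We verify the identity pointwise: on $B$ the maximum equals $\max\{\|\xi-a\|_1,0\}=\|\xi-a\|_1$, and off $B$ it equals $0$. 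Each $-h_\eta=\langle-\eta,\xi-a\rangle+\chi_B$ is convex, lower semicontinuous and proper, because $B$ is a closed polyhedron. Each $h_\eta$ is also not identically $-\infty$ on $\Xi$, since $\emptyset\ne B\subset\Xi$. We also check the remaining assumptions: $h$ is upper semicontinuous because $B$ is closed and $h\ge0$, and $h\in L^1(\empirical)$ trivially. Assumption~\ref{assumption:boundary:mass} is the standing assumption inherited from Theorem~\ref{thm:dual:local:discrepancy}.

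Next we compute the conjugates and the support function appearing in the constraints of Proposition~\ref{prop:tractable reformulation}. For the piece $h_\eta$ we have
\begin{align*}
[-h_\eta]^*(w)=\sup_{\xi\in B}\langle w+\eta,\xi\rangle-\langle\eta,a\rangle=\sigma_B(w+\eta)-\langle\eta,a\rangle .
\end{align*}
By LP duality for the polyhedron $B=\{C_B\xi\preceq d_B\}$,
\begin{align*}
\sigma_B(u)=\inf\{\langle d_B,\gamma\rangle:\ \gamma\succeq0,\ C_B^\top\gamma=u\},
\end{align*}
and similarly $\sigma_\Xi(\nu)=\inf\{\langle d,\delta\rangle:\ \delta\succeq0,\ C^\top\delta=\nu\}$. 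Substituting $w=z_{ki\eta}-\nu_{ki\eta}$ and $\nu_{ki\eta}=C^\top\delta_{ki\eta}$ gives the first and third constraints of the statement.

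For the zero piece, $[-h_0]^*(w)=\sup_\xi\langle w,\xi\rangle$ equals $0$ if $w=0$ and $+\infty$ otherwise, so feasibility forces $\nu_{ki}=z_{ki}$. The constraint then becomes $\sigma_\Xi(z_{ki})-\langle z_{ki},\xi_{ki}\rangle\le s_{ki}$, and LP duality for $\sigma_\Xi$ gives the second and fourth constraints of the statement.

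Each infimum representation sits on the smaller side of an inequality $\le s_{ki}$. Replacing the support functions by their LP duals therefore introduces existential variables, and the feasible set in the remaining variables is unchanged. Strong LP duality holds because $B$ and $\Xi$ are nonempty. If the primal supremum is $+\infty$, the dual is infeasible, and both sides of the constraint are infeasible consistently. The norm constraints $\|z\|_*\le\lambda_k$ carry over directly from Proposition~\ref{prop:tractable reformulation}.

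The main obstacle is the validity of the max-decomposition together with the treatment of points outside $B$. The function $h$ is discontinuous on $\mathrm{bdry}(B)$, so upper semicontinuity must be checked; it holds since $h$ is the indicator of a closed set times a nonnegative continuous function. We must also ensure that Assumption~\ref{assumption:h_j} tolerates extended-valued pieces, and it does, since it allows $h_j:\Rat{d}\to\Bar{\mathbb{R}}$. The remaining LP-duality substitutions are routine.
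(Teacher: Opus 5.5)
Your proposal is correct and matches the paper's proof step by step: the same decomposition of $h$ into the pieces $h_\eta$ and $h_0$, application of Proposition~\ref{prop:tractable reformulation}, LP duality for $\sigma_B$ and $\sigma_\Xi$, the elimination $z_{ki}=\nu_{ki}$, and a final minimization over $a\in B$. You also justify dropping the inner infima, using attainment of finite LP optima and dual infeasibility when the support function is $+\infty$; the paper leaves this implicit.
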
  

\begin{proof}
For the proof, we write the cost function $h$ as 
\begin{align*}
    h(\xi):= \bone_{B}(\xi)\| \xi-a \|_1 = \max \Big\{\max_{\substack{\eta\in\{-1,1\}^N}} \{h_{\eta}(\xi)\},h_0(\xi)\Big\},
\end{align*}
where $    h_\eta(\xi) := \langle \eta,\xi-a \rangle -\chi_{B}(\xi)\; {\rm{ with }}\;  \eta\in\{-1,1\}^N,$ and $h_0(\xi) = 0$.
Each $-h_\eta$, $-h_0$, is proper, lower semicontinuous, and convex; hence Assumption~\ref{assumption:h_j} holds. In addition, since $B$ is closed, $h$ is upper semicontinuous, which establishes also Assumption~\ref{assumption:h}.   
Thus, according to Proposition~\ref{prop:tractable reformulation}, the  worst-case expectation in \eqref{eq:DRO_coverage_single_agent} is equal to the value of the optimization problem  
\begin{align*}
\inf_{\substack{\lambda_k,s_{ki},z_{ki\eta},\nu_{ki\eta}\\z_{ki},\nu_{ki}}} & \langle \bm\lambda,\bm\eps\rangle + \frac{1}{n}\sum_{k=1}^m \sum_{i=1}^{n_k} s_{ki}\\
\text{s.t.}\hspace{1.9em} & [-{h_{\eta}}]^*(z_{ki\eta}-\nu_{ki\eta}) + \sigma_{\Xi}(\nu_{ki\eta}) - \langle z_{ki\eta},\xi_{ki}  \rangle \leq s_{ki}\\
&[- h_0]^*(z_{ki}-\nu_{ki}) + \sigma_{\Xi}(\nu_{ki}) - \langle z_{ki},\xi_{ki}  \rangle \leq s_{ki}\\
&\|z_{ki\eta}\|_* \leq \lambda_k, \quad \|z_{ki}\|_* \leq \lambda_k \\
&k\in[m],\;i\in[n_k],\;\eta\in \{-1,1\}^N.
\end{align*}

Here, the convex conjugate of each $-h_{\eta}$ is  
\begin{align*}
    [-h_{\eta}]^*(z) &= \sup_{\xi\in\Rat{N}} \langle z,\xi \rangle + h_{\eta}(\xi) \\&= \sup_{\xi\in\Rat{N}} \langle z,\xi \rangle + \langle \eta,\xi-a \rangle - \chi_{B}(\xi)
     \\&= \sup_{\xi\in B} \langle z,\xi \rangle + \langle \eta,\xi-a \rangle
     \\&= -\langle \eta,a \rangle +\sup_{C_B\xi\preceq d_B} \langle z +  \eta,\xi \rangle
     \\&= -\langle \eta,a \rangle +\inf_{\substack{(C_B)^\top \gamma = z
     +\eta\\\gamma\succeq0}}\langle   d_{B},\gamma   \rangle.
\end{align*}
Taking further into account that $[- h_0]^*(z)=\chi_{\{0\}}(z)$ and computing the support function 
\begin{align*}
\sigma_{\Xi}(\nu) 
= \sup_{\xi\in\Xi} \ \langle \nu,\xi\rangle = \sup_{C\xi \preceq d} \ \langle \nu,\xi\rangle 
= \inf_{\substack{C^\top \delta = \nu\\\delta \succeq 0}} \ \langle \delta,d\rangle,
\end{align*}
we may plug all these expressions in the constraints of the above program (removing the ``inf" operators as they are substituted in the left-hand-side of the corresponding inequalities) to obtain the equivalent reformulation 
\begin{align*}
\inf_{\substack{\lambda_k,s_{ki},z_{ki\eta},\nu_{ki\eta},z_{ki},\nu_{ki}\\\gamma_{ki\eta},\delta_{ki\eta},\delta_{ki}\;\succeq 0}} & \langle \bm\lambda,\bm\eps\rangle + \frac{1}{n}\sum_{k=1}^m \sum_{i=1}^{n_k} s_{ki}\\
\text{s.t.}\hspace{3.4em} & -\langle \eta,a \rangle + \langle   d_{B},\gamma_{ki\eta}   \rangle +\langle \delta_{ki\eta},d \rangle  -\langle z_{ki\eta},\xi_{ki}  \rangle \leq s_{ki} \\ 
& \chi_{\{0\}}(z_{ki} - \nu_{ki}) +  \langle \delta_{ki}, d \rangle -\langle z_{ki},\xi_{ki}  \rangle \leq s_{ki} \\
& C_B^\top \gamma_{ki\eta} = z_{ki\eta}-\nu_{ki\eta}+\eta \\
& C^\top \delta_{ki\eta} = \nu_{ki\eta},\quad C^\top \delta_{ki} = \nu_{ki} \\
& \|z_{ki\eta}\|_* \leq \lambda_k, \quad \|z_{ki}\|_* \leq \lambda_k\\
& k\in[m],\;i\in[n_k],\; \eta\in\{-1,1\}^N.
\end{align*}
Since the term $\chi_{\{0\}}(z_{ki} - \nu_{ki})$ enforces the additional equality constraint $z_{ki} = \nu_{ki}$,  minimizing the above expression over the location $a\in B$ yields the desired result.  
\end{proof}

\section{Data-driven tuning of local transport costs}

\label{sec:Data-driven tuning of local transport costs}

Among our goals is to compare the 
performance of the ambiguity sets $\Ball{\empirical}{\eps}$ and $\lcAmbSet{\empirical}{\bm\eps}$. To achieve this, we seek a data-driven calibration process to determine their radii $\eps$ and $\bm\eps$ in a consistent manner. 
Although the underlying distribution $P$ is considered unknown, we assume that it belongs to a known class $\class$, and introduce a computational process to approximate it when the class is parametric, i.e., when $\class := \{P_\theta\}_{\theta\in\Theta}\subset\P(\Xi)$ for some finite-dimensional parameter set $\Theta\subset\Rat{\ell}$.
Our ideal goal is to pick radii $\eps$ and $\bm{\eps}$ that are  ``as small as possible" and ensure that the corresponding ambiguity sets contain the distribution $P$ with prescribed confidence $1-\beta$. Since the distribution $P$ in our setting is unknown, it can be any element of the class $\class$.

Selecting the smallest possible radius of the monolithic ambiguity ball $\Ball{\empirical}{\eps}$, which contains any data-generating distribution $P\in\mathcal C$ with prescribed confidence, is a clear task, at least conceptually. Namely, for any fixed $\beta\in(0,1)$ and number $n$ of samples, it suffices to select    
\begin{align}
& \eps\equiv\eps(n,\beta;\mathcal C):=\inf\{\epsilon>0: \bP(P\in\B_p(P^n,\epsilon)) \ge 1-\beta\;\textup{for all}\;P\in\mathcal C\}.  \label{eq:optimal radius scalar}
\end{align}

On the other hand, it is not possible to select the smallest vector-valued radius $\bm\eps$ in a similar manner, since $\RgeO^{m}$ is not totally ordered. To overcome this obstacle, we consider an auxiliary scalar discrepancy measure $h:\RgeO^{m}\to\RgeO$ and seek the vector radii that minimize $h(\bm\eps)$. 
Choosing the strongly convex function $h(x)=\|x\|_2^2$ enables us to select a unique vector $\bm\eps$ from the closure of the set of feasible vector discrepancies. In particular, we set  
\begin{align*}
\mathscr E(Q,P):=\argmin_{\bm\eps\in{\rm cl}(T_{p,\A_m}(Q,P))}\|\bm\eps\|_2^2,    
\end{align*}
which is uniquely defined. Since we are interested in a data-driven tuning of the vector radius, we define
\begin{align*}
\bm v(n;P):=\frac{\bE[\mathscr E(P^n,P)]}{\|\bE[\mathscr E(P^n,P)]\|_2}\in\mathbb S^{m-1}\cap\RgeO^m
\end{align*}
and set 
\begin{align*}
\bm v(n;\mathcal C):=\argmin_{\bm v\in\mathbb S^{m-1}}r(\bm v)
\end{align*}
where 
\begin{align*}
r(\bm v):=\inf\{r>0:\bm v(n;P)\in B(\bm v,r)\;\textup{for all}\;P\in\mathcal C\}.
\end{align*}
In this setting, $\bm v(n;P)$ denotes the expected direction of the uniquely selected discrepancy vector $\mathscr E(P^n,P)$ between the empirical and the true distribution. Then the purpose of selecting $\bm v(n;\mathcal C)$ as above is to find a point on the sphere $\mathbb S^{m-1}$ that  minimizes the worst-case distance from any expected direction generated by the distributions in $\mathcal C$. This point will be the direction along which we select the vector-valued radius of the data-driven  ambiguity set for distributions in class $\mathcal C$. Thus, in analogy to the data-driven selection of $\eps$ for Wasserstein balls, we select
\begin{align}
\bm\eps \equiv\; & \bm\eps(n,\beta;\mathcal C):=\lambda_\star(n,\beta;\mathcal C)\bm v(n;\mathcal C) \label{eq:optimal:vector:radius} \\
\lambda_\star(n,\beta;\mathcal C):=\; & \inf\{\lambda>0: \bP(P\in\lcAmbSet{P^n}{\lambda \bm v(n;\mathcal C))} \ge 1-\beta\;\textup{for all}\;P\in\mathcal C\}.  \nonumber
\end{align}

We next outline how radii such as the one in \eqref{eq:optimal radius scalar} can be approximated computationally in low-dimensional parametric settings with compact parameter sets for fixed $n$ and $\beta$. The idea relies on the following five-step process: 
\def\theenumi{\roman{enumi}}
\begin{enumerate}
    \item sample a large number $\ell\in\mathbb N$ of distributions $P_{\theta_\kappa}$ from the class $\mathcal C$ (or preselect a grid of parameter values); 
    \item for each such distribution $P_{\theta_\kappa}$ sample a large number  $\nu\in\mathbb N$ of empirical distributions $P_{\theta_\kappa}^{n,j}$, $j\in[\nu]$;
    \item for each $\kappa\in[\ell]$ and $j\in[\nu]$ compute the Wasserstein distance $\eps^{\kappa,j}:=W_p(P_{\theta_\kappa}^{n,j},P_{\theta_\kappa})$;
    \item for each sampled distribution $P_{\theta_\kappa}$ pick
    \begin{align*}
    \eps^\kappa:=\min_{j\in[\nu]}\Big\{\eps^{\kappa,j}:\frac{\#\{j'\in[\nu]:\eps^{\kappa,j'}\le\eps^{\kappa,j}\}}{\nu}\ge1-\beta\Big\},
    \end{align*} 
    which yields an empirical lower value for the radius of the ball that contains $P_{\theta_\kappa}$ with confidence $1-\beta$;
    \item select $\widehat\eps:=\max_{\kappa\in[\ell]}\eps^\kappa$ as an empirical approximation of the radius $\eps$ in \eqref{eq:optimal radius scalar}.  
\end{enumerate}

We next outline how these steps can be modified to obtain a data-driven approximation of the vector-valued radius $\bm\eps(n,\beta;\mathcal C)$ in \eqref{eq:optimal:vector:radius}. To this end, we will exploit the fact that the vector-valued discrepancy $\mathscr E(Q,P)$ can be determined by the optimizer $\bm\eps_{\star}=(\eps_{1\star},\ldots,\eps_{m\star})$ of the  problem  
\begin{align}
& \inf_{\bm\eps\in\RgeO^m,\pi\in \Pi(Q,P)} \|\bm\eps\|_2^2 \label{minimum:norm:discrepancy} \\
&\qquad\quad\;{\rm s.t.}\int_{A_k \times \Xi} \rho(\zeta,\xi)^p d\pi(\zeta,\xi) \leq  \eps_k\;\textup{for all}\;k\in[m]. \nonumber 
\end{align}

\begin{rem}
\longthmtitle{Existence of minimizer}
By the results provided so far, there is no guarantee that the optimal solution of the problem~\eqref{minimum:norm:discrepancy} is attained for some feasible pair $(\bm\eps,\pi)$. It can be rather understood as the limit in the first argument of a feasible sequence $(\bm\eps^\kappa,\pi^\kappa)$. Nevertheless, by convexity of the set of feasible $(\bm\eps,\pi)$ and strong convexity of $\bm\eps\mapsto\|\bm\eps\|_2^2$ this limit is guaranteed to be unique. \hfill $\Box$
\end{rem}

Steps (i) and (ii) remain exactly the same for the data-driven tuning of the vector radius $\bm\eps$, while Steps (iii) and (iv) are substantially modified and involve more computations. In particular, we have the updated steps:     

\begin{itemize}
\item [(iiia)] for each $\kappa\in[\ell]$ and $j\in[\nu]$ compute the minimum-norm vector discrepancy $\bm\eps^{\kappa,j}:=\mathscr E(P_{\theta_\kappa}^{n,j},P_{\theta_\kappa})$ via the quadratic optimization problem~\eqref{minimum:norm:discrepancy};
\item [(iiib)] for each $\kappa\in[\ell]$  approximate the expected value of $\mathscr E(P_{\theta_\kappa}^n,P_{\theta_\kappa})$ by $\bm\eps^\kappa:=\frac{1}{\nu}\sum_{j\in[\nu]}\bm\eps^{\kappa,j}$
 and approximate $\bm v^\kappa(n;P_{\theta_\kappa})$ by the data-driven vector $\bm v^\kappa:=\bm\eps^\kappa/\|\bm\eps^\kappa\|_2$;  
 \item [(iiic)] approximate $\bm v(n;\mathcal C)$ by $\widehat{\bm v}:=\argmin_{\bm v\in\mathbb S^{m-1}}\widehat r(\bm v)$ 
with  
\begin{align*}
\widehat r(\bm v):=\inf\big\{r>0:\bm v^\kappa(n;P_{\theta_\kappa})\in B(\bm v,r) \; \textup{for all}\;\kappa\in[\ell]\};
\end{align*} 
 \item [(iiid)] for each  $\kappa\in[\ell]$ and $j\in[\nu]$ compute the scaling $\lambda_\star^{\kappa,j}:=\lambda_\star(\widehat{\bm v};P_{\theta_\kappa}^{n,j},P_{\theta_\kappa})$  of the radii of the minimum size ambiguity set in the direction $\widehat{\bm v}$ by solving the linear program of Corollary~\ref{corollary:minimum:size};
\item [(iv)] for each sampled distribution $P_{\theta_\kappa}$ pick
  \begin{align*}
  \lambda_\star^\kappa:=\min_{j\in[\nu]}\Big\{\lambda_\star^{\kappa,j}:\frac{\#\{j'\in[\nu]:\lambda_\star^{\kappa,j'}\le\lambda_\star^{\kappa,j}\}}{\nu}\ge1-\beta\Big\};
    \end{align*} 
 \item [(v)] select $\widehat{\bm\eps}:=\max_{\kappa\in[\ell]}\lambda_\star^\kappa\widehat{\bm v}$ as an empirical approximation of the radius $\bm\eps$ in \eqref{eq:optimal:vector:radius}.
\end{itemize}

\begin{rem}
\longthmtitle{Numerical optimization  aspects} 
The approximation of the Wasserstein distances in Step (iii) of the outlined numerical process for the monolithic ambiguity radius can be carried out efficiently using tools from computational optimal transport~\cite{GP-MC:19}.
In direct analogy to the classical optimal transport setting, the optimization problems in both \eqref{minimum:norm:discrepancy} and Corollary~\ref{corollary:minimum:size} (used in Steps (iiia) \& (iiid)) are convex programs with linear constraints on the space of couplings and can be well approximated by finite-dimensional convex programs (e.g., by quantizing  the  distributions). \hfill $\Box$
\end{rem}

\section{Simulations}

In this section, we provide an illustrative simulation example to show how local-discrepancy ambiguity sets can outperform monolithic ambiguity balls for worst-case expectations that are affected by restricted regions of the uncertainty domain. We consider the facility location problem depicted in Figure~\ref{fig:Setup}.
The objective is to determine the optimal locations of two fire stations, each responsible for one of the two red rectangular regions.
For each responsibility region, we compute the optimal location by solving \eqref{eq:DRO_coverage_single_agent} under the local-discrepancy ambiguity set $\mathcal{T}_{1,\A_3}(P^n,\bm\eps)$ and under the monolithic Wasserstein ball $\B_{1}(P^n,\eps)$. The partition of the sample space and the true underlying distribution $P_{\rm{true} }$ are shown in Figure~\ref{fig:Setup}.
\begin{figure}[h]
	\centering
	\includegraphics[width=1\linewidth]{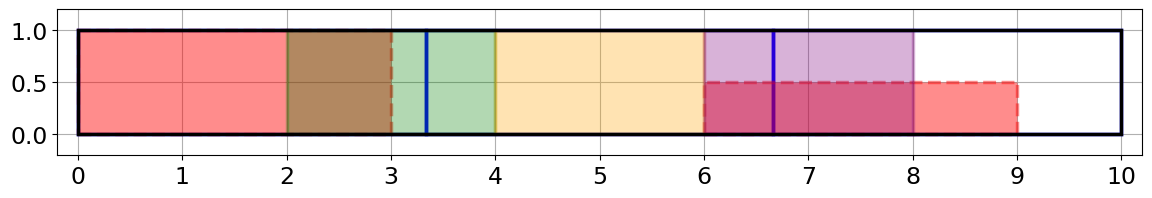}  
	\caption{The sample space $\Xi = [0,10]\times [0, 1]$ is covered by the three areas $\A_3$ between the blue segments, the true underlying distribution is the three-component mixture of uniform distributions $1/3(\mathcal U([2,4]\times [0,1])+\mathcal U([4,6]\times [0,1])+\mathcal U([6,8]\times [0,1])$ (uniforms over green, yellow and purple rectangles respectively), and the red rectangles denote the areas of responsibility $B_1({\rm{left}}), B_2({\rm{right}})$ of two fire departments.
    } \label{fig:Setup}
\end{figure} 

To assess the statistical performance of the two ambiguity sets, we generated $100$ independent empirical measures $P^n$, each based on $100$ samples, and computed the corresponding optimal DRO facility locations.
The radii $\bm\eps,\eps$ were calculated according to the process described in Section~\ref{sec:Data-driven tuning of local transport costs}, where $\mathcal{C}$ consists of all convex combinations of the three uniform distributions $\mathcal U([2,4]\times [0,1])$, $\mathcal U([4,6]\times [0,1])$, and $\mathcal U([6,8]\times [0,1])$, with weights $m_1,m_2,m_3\in[1/3-0.1,1/3+0.1]$. We selected a confidence level of $70\%$, which yielded the radii $\eps = 0.319$ and $\bm\eps=(0.1,0.195,0.109)$ for the ball and the local-discrepancy ambiguity set, respectively.
For each empirical measure, we solved the DRO problem using both ambiguity sets and evaluated the true expected cost at the resulting optimizers. The local-discrepancy ambiguity set achieved a lower expected cost in $93\%$ of the trials for region $B_1$ and $68\%$ for region $B_2$, as shown in Figure~\ref{fig:box-plots}. Overall, the local-discrepancy ambiguity set shows a clear advantage over the monolithic Wasserstein ball, yielding lower true expected costs substantially more often in both facility-location problems.

It is worth noting that the statistical performance of the two ambiguity sets is assessed across multiple optimization problems affected by the same uncertainty. The proposed setting also retains information from samples outside the responsibility regions, unlike restricting the empirical measure to $B_1$ and $B_2$; nearby samples may still provide information about probability mass that affects the decision.

\begin{figure}[h]
	\centering
	\includegraphics[width=0.75\linewidth]{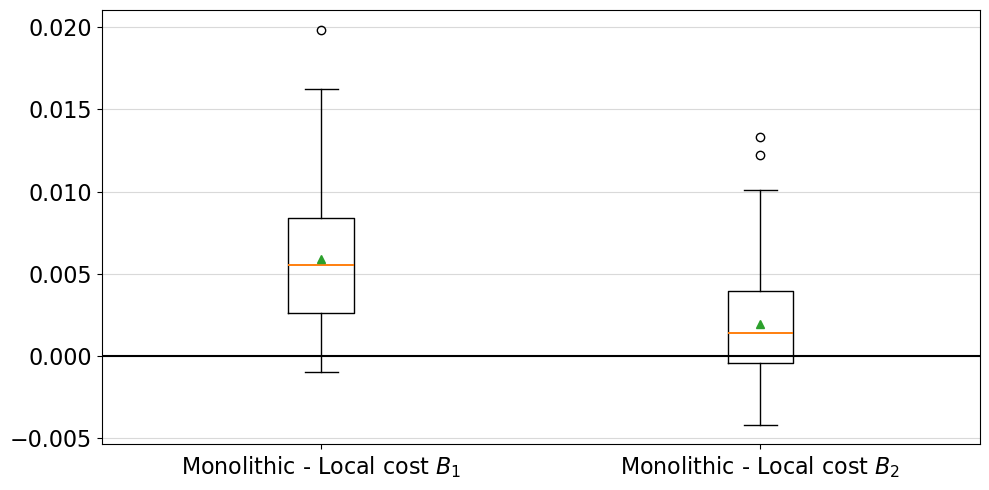}  
	\caption{The figure shows the difference between the true expected cost using the DRO optimizer of the Wasserstein ball and the corresponding local-discrepancy ambiguity set over $100$ trials. For both regions,  the local-discrepancy ambiguity set exhibits decisions with superior statistical performance.    
    } \label{fig:box-plots}
\end{figure}

\section{Conclusion}

This paper introduced local-discrepancy ambiguity sets as an alternative to monolithic Wasserstein balls. These sets rely on multiple local transport constraints across sub-regions of the uncertainty domain. We derived strong duality results and tractable reformulations for DRO problems over these ambiguity sets and applied them to optimal facility location problems. The results demonstrate a clear performance advantage of the proposed ambiguity sets in the considered facility-location problems, where the decisions depend on uncertainty over restricted regions of the domain. Future work includes the derivation of finite-sample guarantees for tuning the vector-valued radii and their use to establish conditions under which local-discrepancy ambiguity sets outperform Wasserstein balls in suitable DRO and uncertainty-quantification problems. We will also explore optimal partitioning schemes of the sample space that may depend on the data and on prior knowledge of the distribution class.

\bibliographystyle{IEEEtran}
\bibliography{Ref}
\end{document}